\def\ve{\varepsilon}
\def\mod{\,\text{\rm mod}\;}
\def\beq{\begin{equation}}
\def\eeq{\end{equation}}
\def\cite#1{{\rm [#1]}}
\newtheorem{lemma}{Lemma}
\newtheorem{cor}{Corollary}
\theoremstyle{definition}
\newtheorem*{rema}{Remark}
\begin{document}

\numberwithin{equation}{section}
\title{Patterns of primes}

\author{J\'anos Pintz\thanks{Supported by OTKA Grants K72731, K67676 and ERC-AdG.228005.}}

\date{}
\maketitle

\section{Introduction}
\label{sec:1}
A few years ago Green and Tao \cite{GT} proved their striking result about patterns in primes.

\medskip
\noindent
{\bf Theorem {\rm (Green--Tao)}.}
{\it The primes contain arbitrarily long arithmetic progressions.}

\medskip
The method of proof immediately gave that the same result is true for any subset $\mathcal P'$ of the primes $\mathcal P = \{p_n\}^\infty_{n = 1}$ with positive relative upper density, that is with
\beq
\limsup_{N \to \infty} \frac{\bigl|\mathcal P' \cap [1, N]\bigr|}{\pi(N)} > 0
\label{eq:1.1}
\eeq
(where $\pi(N)$ denotes the number of primes less or equal to~$N$, $|\mathcal A|$ the number of elements of a set $A$, and the fact that the number of $m$-term arithmetic progressions obtained below $N$ is $\gg N^2(\log N)^{-m}$.

Another, albeit conditional result of Goldston, Y{\i}ld{\i}r{\i}m and the author \cite{GPY2} yielded the existence of other patterns.

\medskip
\noindent
{\bf Theorem {\rm (\cite{GPY2})}.}
{\it If the primes have a distribution level $\vartheta > 1/2$, that is, if for any positive $\ve$ and $A$ we have
\beq
\sum_{q \leq N^{\vartheta - \ve}} \max_{\substack{a\\ (a,q) = 1}} \biggl| \sum_{\substack{p \leq x\\ p\equiv a(\mod q)}} \log p - \frac{N}{\varphi(q)} \biggr| \ll_{\ve, A} \frac{N}{\log^A N},
\label{eq:1.2}
\eeq
then there exists a positive even $d \leq C_1(\vartheta)$ and infinitely many pairs of primes}
\beq
p, p + d \in \mathcal P.
\label{eq:1.3}
\eeq

\medskip
The author showed recently that a combination of the two above results is possible, showing thereby new patterns of primes.

\medskip
\noindent
{\bf Theorem {\rm \cite{Pin}}.}
{\it If the primes have a distribution level $\vartheta > 1/2$ then there exists a positive even $d \leq C_1(\vartheta)$ such that the set $\mathcal P(d)$ of primes $p$ satisfying \eqref{eq:1.3} contains arbitrarily long arithmetic progressions.}

\begin{rema}
The above (conditionally existing) patterns form two-dimensional arithmetic progressions with one difference being bounded.
\end{rema}

\begin{rema}
In the above two theorems we have $0 < d \leq 16$ if $\vartheta > 0.971$, in particular, if the Elliott--Halberstam conjecture \cite{EH} $\vartheta = 1$ is true.
On the other hand, the best unconditional result $\vartheta = 1/2$, the celebrated Bombieri--Vinogradov theorem, unfortunately does not imply the existence of infinitely many bounded gaps between consecutive primes.
\end{rema}

However, beside Selberg's sieve, the Bombieri--Vinogradov theorem played a crucial role in the proof \cite{GPY2} of
\beq
\Delta_1 = 0, \quad \text{ where } \ \Delta_\nu = \liminf_{n \to \infty} (p_{n + \nu} - p_n) / \log p_n,
\label{eq:1.4}
\eeq
thereby improving the best known bound
\beq
\Delta_1 < 0.2486
\label{eq:1.5}
\eeq
of Helmut Maier \cite{Mai}.

One important question which remained open after the work \cite{GPY1} was whether the small gaps of size $< \eta \log p$ appear with a positive density for any $\eta > 0$.
Since the existence of some patterns in a subset of primes can be deduced from information about the relative density of the subset, this gives an extra interest to problems asking whether some ``events'' as short gaps between consecutive primes or short blocks of gaps between consecutive primes appear in a positive proportion of all cases or not.
This motivates the definition of the quantities
\beq
\Delta^*_\nu = \inf \Biggl\{ c_\nu; \ \liminf_{x \to \infty} \frac{\bigl|\{p_n \leq x; p_{n + \nu} - p_n \leq c_\nu \log p_n\}\bigr|}{\pi(x)} > 0\Biggr\}.
\label{eq:1.6}
\eeq

The methods of Hardy--Littlewood, \cite{HL, Ran} Erd\H{o}s \cite{Erd}, Bombieri--Davenport \cite{BD} and its refinements by Huxley \cite{Hux1, Hux2, Hux3} yielded always a positive proportion of small gaps; however, the ingenious improvement \eqref{eq:1.5} by H. Maier \cite{Mai} just showed the existence of a rare set of small gaps or blocks of gaps.
Thus, our knowledge in the time of Maier's work was as follows:
\begin{alignat}2
\label{eq:1.7}
\Delta^*_1 &\leq 0.4425... \ \text{\cite{Hux2}}, \ &
\Delta_1 &\leq e^{-\gamma} \cdot 0.4425... = 0.2486...  \ \text{\cite{Mai}},\\
\label{eq:1.8}
\Delta^*_\nu &\leq \nu - \frac58 + o(1) \ \text{\cite{Hux2}}, \  &
\Delta_\nu &\leq e^{-\gamma} \left(\nu - \frac58 + o(1)\right) \ \text{\cite{Mai}}.
\end{alignat}

\begin{rema}
A slight improvement over \eqref{eq:1.7}--\eqref{eq:1.8} is contained in \cite{Hux3}.
However, Maier refined the version \eqref{eq:1.7}--\eqref{eq:1.8} of \cite{Hux2}.
\end{rema}

Goldston and Y{\i}ld{\i}r{\i}m \cite{GY} worked out a method which yielded
\beq
\Delta^*_1 \leq 1/4
\label{eq:1.9}
\eeq
and it remained unclear whether the method of \cite{GPY2} proving $\Delta_1 = 0$ is able to yield $\Delta^*_1 = 0$ too.
Very recently, this question was answered positively.

\medskip
\noindent
{\bf Theorem {\rm \cite{GPY3}}.}
{\it Unconditionally we have $\Delta^*_1 = 0$; further the Elliott--Halberstam conjecture \cite{EH} implies $\Delta^*_2 = 0$.}

\medskip
Taking into account the stronger form of the Green--Tao Theorem (cf.\ \eqref{eq:1.1}) the above theorem implies

\begin{cor}
\label{cor:1}
Let $\eta > 0$ be arbitrary, $p'$ be the prime following $p$.
Then the set
\beq
\mathcal P'(\eta) = \{p \in \mathcal P;\ p' - p \leq \eta \log p\}
\label{eq:1.10}
\eeq
contains arbitrarily long arithmetic progressions.
The same is true under EH for
\beq
\mathcal P''(\eta) = \bigl\{p \in \mathcal P;\ p_{n + 2} - p_n \leq \eta \log p_n\bigr\}.
\label{eq:1.11}
\eeq
\end{cor}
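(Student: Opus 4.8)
The plan is to obtain the Corollary by feeding the Theorem of \cite{GPY3} into the strengthened form of the Green--Tao theorem recorded after \eqref{eq:1.1}, namely that every subset $\mathcal{P}'\subseteq\mathcal{P}$ with positive relative upper density \eqref{eq:1.1} contains arbitrarily long arithmetic progressions. Thus everything reduces to checking that $\mathcal{P}'(\eta)$ --- and, under EH, $\mathcal{P}''(\eta)$ --- has positive relative upper density among the primes.

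First I would unpack the definition \eqref{eq:1.6}. By \eqref{eq:1.9} the set of admissible constants $c_1$ in \eqref{eq:1.6} (those for which the $\liminf$ there is positive) is nonempty, and by the Theorem of \cite{GPY3} its infimum $\Delta^*_1$ equals $0$. Hence, given $\eta>0$, there is an admissible $c_1$ with $0<c_1\le\eta$, i.e.
\[
\liminf_{x\to\infty}\frac{\bigl|\{p_n\le x;\ p_{n+1}-p_n\le c_1\log p_n\}\bigr|}{\pi(x)}>0 .
\]
Since $c_1\le\eta$ forces $\{p_n\le x;\ p_{n+1}-p_n\le c_1\log p_n\}\subseteq\mathcal{P}'(\eta)\cap[1,x]$ for every $x$, it follows that $\liminf_{x\to\infty}|\mathcal{P}'(\eta)\cap[1,x]|/\pi(x)>0$, which in particular establishes \eqref{eq:1.1} with $\mathcal{P}'=\mathcal{P}'(\eta)$. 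The strong Green--Tao theorem then yields arbitrarily long arithmetic progressions in $\mathcal{P}'(\eta)$.

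For the second assertion I would repeat this verbatim with $\nu=2$: under EH the Theorem of \cite{GPY3} gives $\Delta^*_2=0$, one picks an admissible $c_2\le\eta$ for the gap $p_{n+2}-p_n$, obtains $\{p_n\le x;\ p_{n+2}-p_n\le c_2\log p_n\}\subseteq\mathcal{P}''(\eta)\cap[1,x]$, hence positive relative density of $\mathcal{P}''(\eta)$, and concludes by Green--Tao. There is essentially no obstacle at this stage: all the depth sits in the two quoted theorems (in particular in the identity $\Delta^*_1=0$), and the only point needing a moment's care is the inference ``$\Delta^*_1=0\Rightarrow$ there is an \emph{admissible} $c_1\le\eta$'', which is immediate from the meaning of the infimum once one knows the admissible set is nonempty, together with the harmless observation that \eqref{eq:1.6} is phrased with a lower density whereas \eqref{eq:1.1} only asks for the upper density.
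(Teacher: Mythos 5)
Your proposal is correct and is precisely the argument the paper intends: the paper states the corollary as an immediate consequence of the Theorem of [GPY3] ``taking into account the stronger form of the Green--Tao Theorem (cf.~\eqref{eq:1.1})'', i.e.\ $\Delta^*_1=0$ (resp.\ $\Delta^*_2=0$ under EH) gives $\mathcal P'(\eta)$ (resp.\ $\mathcal P''(\eta)$) positive relative lower, hence upper, density, and the positive-density Green--Tao theorem then supplies the arithmetic progressions. You have filled in the two small points the paper leaves implicit (upward monotonicity of the admissible set in \eqref{eq:1.6} and the passage from $\liminf$ to $\limsup$) correctly, so no issues.
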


The method of proof of \cite{GPY3} can also yield that the best unconditional bound of \cite{GPY2},
\beq
\Delta_\nu \leq \bigl(\sqrt{\nu} - \sqrt{2\vartheta}\bigr)^2,
\label{eq:1.12}
\eeq
can be refined to
\beq
\Delta^*_\nu \leq \bigl(\sqrt{\nu} - \sqrt{2\vartheta}\bigr)^2.
\label{eq:1.13}
\eeq

\begin{rema}
The unconditional result
\beq
\Delta_\nu \leq e^{-\gamma}\bigl(\sqrt{\nu} - 1\bigr)^2
\label{eq:1.14}
\eeq
of the work \cite{GPY3} cannot be modified to yield the same estimate to $\Delta^*_\nu$ as well, since it uses Maier's matrix method too (as can be guessed from the factor $e^{-\gamma})$, which in general yields just a negligible portion of primes with a given property.
\end{rema}

The aim of this note is to show that the method of the mentioned work \cite{GPY3} can be modified to yield for any fixed $\eta > 0$ for $N \to \infty$ many $\nu + 1$-dimensional patterns of type $(d_1, \dots, d_\nu)$ with $0 < d_1 < \dots < d_\nu$ such that
\begin{align}
\label{eq:1.15}
\bigl|\mathcal P(d_1, \dots, d_\nu)\bigr|
&= \bigl| \bigl\{p \in \mathcal P, \, p \in [N, 2N),\, p + d_i \in \mathcal P \, (i\! =\! 1, ..., \nu)\bigr \}\bigr| \\
&\geq \frac{c_1(\nu, \eta)N}{(\log N)^{\nu + 1}},
\nonumber
\end{align}
\beq
d_\nu \leq \bigl(\Delta^*_\nu + \eta \bigr) \log N,
\label{eq:1.16}
\eeq
where we choose $c_1(\nu, \eta)$ sufficiently small, depending on $\eta$ and~$\nu$.

The exact formulation of our result to be proved is as follows.

\medskip
\noindent
{\bf Theorem.}
{\it Let $\eta > 0$ be any positive constant, $\nu$ and $m$ natural numbers.
Then we have a positive constant $c(\eta, \nu)$ depending on $\eta$ and~$\nu$ such that for any $N > N_0(\eta, \nu, m)$ we have a set $\mathcal D^\nu_{\!N}$ of $\nu$-tuples $(d_1, \dots, d_\nu)$ with $0 < d_1 < \dots < d_\nu$ such that
\beq
\bigl|\mathcal D^\nu_{\!N}\bigr| \geq c(\eta, \nu) \log^\nu\! N
\label{eq:1.17}
\eeq
and every element of $\mathcal D^\nu_{\!N}$ satisfies \eqref{eq:1.15} and \eqref{eq:1.16}.}

\medskip
\noindent
{\bf Corollary}
{\it Under the above conditions, if $\bigl(d_i\bigr)^\nu_{i = 1} \in \mathcal D^\nu_{\!N}$ then the set\break
$\mathcal P(d_1, \dots, d_\nu)$ of primes contains at least $c'(\eta, \nu, m) \frac{N^2}{\log^m\! N}$ arithmetic progressions of length~$m$.}

\begin{rema}
In such a way we actually obtain a large number of $\nu + 1$-dimensional arithmetic progressions, more exactly a positive proportion of all $\nu$-tuples $(d_1, \dots, d_\nu)$ with $0 < d_1 < \dots < d_\nu \leq (\Delta^*_\nu + \eta) \log N$ will appear as a configuration of primes $p^{(j)} + d_i \in \mathcal P$, $p^{(j)} \in \mathcal P$ where $\{p^{(j)}\}^m_{j = 1}$ forms an $m$-term arithmetic progressions (and consequently so do the primes $p^{(j)} + d_i$ for all $i \in [1, \nu]$).
\end{rema}

\section{Proof of the Theorem}
\label{sec:2}

The number of $\nu + 1$-tuple of primes satisfying $p + d_i \in \mathcal P$ for a concrete $\mathcal D = (d_1, \dots, d_\nu)$ can be estimated from above by Selberg's sieve (cf.\ Theorem~5.1 of \cite{HR} or Theorem~2 in \S~2.2.2 of \cite{Gre})
\beq
\bigl|\mathcal P(d_1, \dots, d_\nu)\bigr| \ll_\nu \frac{N\mathfrak S(\mathcal D^+)}{(\log N)^{\nu + 1}}, \quad \mathcal D^+ = \mathcal D \cup \{0\}.
\label{eq:2.1}
\eeq
This would be immediately sufficient to prove a positive proportion of the required prime $\nu + 1$-tuples if $\mathfrak S(\mathcal H)$ were bounded for $k$-tuples $\mathcal H$ of a given size, which is not the case.
However, using the definition \eqref{eq:1.6} of $\Delta^*_\nu$, with the notation
\beq
H := \bigl\lfloor (\Delta^*_\nu + \eta) \log N \bigr\rfloor,
\label{eq:2.2}
\eeq
we have (with $c_i$ depending always on $\eta$ and $\nu$) by the definition of $\Delta^*_\nu$ and \eqref{eq:2.1}
\beq
\frac{c_1 N}{\log N} \leq \sum_{\substack{\mathcal D \subset [1, H]\\
|\mathcal D| = \nu}} \frac{c_2 N \mathfrak S(\mathcal D^+)}{(\log N)^{\nu + 1}} \leq c_3 \frac{N}{\log N} \cdot \frac1{H^\nu} \sum_{\substack{\mathcal D \subset [1, H]\\
|\mathcal D| = \nu}} \mathfrak S(\mathcal D^+).
\label{eq:2.3}
\eeq
Deleting from the summation those $\mathcal D$'s for which with a sufficiently small $c_4$ we have
\beq
\bigl| \mathcal P(d_1, \dots, d_\nu)\bigr| \leq \frac{c_4 N \mathfrak S(\mathcal D^+)}{(\log N)^{\nu + 1}},
\label{eq:2.4}
\eeq
we obtain for a subset $\mathbb D$ of all $\mathcal D \subset [1,H]$ (we denote summation over this subset by $\sum^*$)
\beq
\frac{c_1 N}{2 \log N} \leq c_3 \frac{N}{\log N} \cdot \frac1{H^\nu} \underset{\mathcal D \in \mathbb D}{\sum\nolimits^*} \mathfrak S(\mathcal D^+).
\label{eq:2.4masodik}
\eeq
In order to prove our theorem it is clearly sufficient to show
\beq
\sum_{\mathcal D \in \mathbb D} 1 \geq c_5 H^\nu .
\label{eq:2.5}
\eeq
Now, using Cauchy's inequality, \eqref{eq:2.4masodik} implies
\beq
H^\nu \leq c_6 \underset{\mathcal D \in \mathbb D}{\sum\nolimits^*} \mathfrak S(\mathcal D^+) \leq c_6 \Biggl(\sum_{\mathcal D \in \mathbb D} 1 \sum_{\substack{\mathcal D \subset [1, H]\\ |\mathcal D| = \nu}} \mathfrak S^2 (\mathcal D^+)\Biggr)^{1/2}.
\label{eq:2.6}
\eeq
Hence, in order to show \eqref{eq:2.5}, thereby our Theorem, it is sufficient to show the following

\begin{lemma}
\label{lem:1}
For fixed $\nu$ and any $H > H_0(\nu)$ we have
\beq
\sum_{\substack{\mathcal D \subset [1, H]\\ |\mathcal D| = \nu}} \mathfrak S^2(\mathcal D^+) \leq c_7(\nu) H^\nu.
\label{eq:2.7}
\eeq
\end{lemma}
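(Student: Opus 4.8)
The plan is to estimate the second moment $\sum_{\mathcal D} \mathfrak S^2(\mathcal D^+)$ by expanding the singular series into its standard Euler product and interchanging the order of summation. Recall that for a $k$-tuple $\mathcal H = \{h_1, \dots, h_k\}$ one has
\[
\mathfrak S(\mathcal H) = \prod_p \left(1 - \frac1p\right)^{-k}\left(1 - \frac{\nu_p(\mathcal H)}{p}\right),
\]
where $\nu_p(\mathcal H)$ is the number of residue classes mod $p$ occupied by $\mathcal H$. For us $k = \nu + 1$ and $\mathcal H = \mathcal D^+ = \{0, d_1, \dots, d_\nu\}$. Writing $\mathfrak S(\mathcal D^+)$ as a (finite, since $\nu_p = \nu+1$ for $p > $ the diameter) product and then expanding $\mathfrak S^2(\mathcal D^+)$, the summand becomes a multiplicative function of a squarefree modulus $q$, and $\sum_{\mathcal D \subset [1,H]} \mathfrak S^2(\mathcal D^+)$ turns into a sum over $q$ of $H^\nu$ times a density factor coming from counting $\nu$-tuples $(d_1, \dots, d_\nu) \bmod q$ weighted by the local factors, plus lower-order terms from the fractional parts. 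The key arithmetic input is that for each prime $p$ the local average $\frac1{p^\nu}\sum_{d_1,\dots,d_\nu \bmod p}\left(1 - \frac{\nu_p(\mathcal D^+)}{p}\right)^2$, after multiplying by $\left(1-\frac1p\right)^{-2(\nu+1)}$, equals $1 + O(1/p^2)$; hence the product over $p$ converges, and one gets $\sum_{\mathcal D} \mathfrak S^2(\mathcal D^+) = C(\nu) H^\nu + O(H^{\nu-1+\epsilon})$ with $C(\nu)$ a convergent Euler product, which yields \eqref{eq:2.7} with $c_7(\nu)$ any constant exceeding $C(\nu)$.

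Concretely I would proceed as follows. First, truncate: since $\nu_p(\mathcal D^+) = \nu + 1$ for all $p > 2H$ (all of $0, d_1, \dots, d_\nu$ distinct mod $p$), the tail of the Euler product over $p > 2H$ contributes a factor $1 + O(1/H)$ uniformly in $\mathcal D$, so it suffices to bound the sum with $\mathfrak S$ replaced by the partial product $\mathfrak S_{2H}(\mathcal D^+) = \prod_{p \le 2H}(\cdots)$. Second, square and expand: $\mathfrak S_{2H}^2(\mathcal D^+) = \prod_{p \le 2H} f_p(\mathcal D^+)$ where $f_p(\mathcal D^+) = \left(1-\frac1p\right)^{-2(\nu+1)}\left(1 - \frac{\nu_p(\mathcal D^+)}{p}\right)^2$ depends only on $\mathcal D^+ \bmod p$. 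Third, sum over $\mathcal D$: write $\sum_{\mathcal D \subset [1,H], |\mathcal D| = \nu} = \frac1{\nu!}\sum_{\substack{d_1, \dots, d_\nu \in [1,H] \\ \text{distinct}}}$ and extend to all (not necessarily distinct) tuples at the cost of an $O(H^{\nu-1})$ error; then the sum factors by CRT over $q = \prod_{p \le 2H} p$. For each $p$, grouping $(d_1, \dots, d_\nu) \bmod p$ and using that $\lfloor H/p \rfloor = H/p + O(1)$ in each class, the local sum is $\frac{H^\nu}{p^\nu}\sum_{\mathbf{d} \bmod p} f_p(\{0\}\cup\mathbf d) + O(H^{\nu-1}p)$. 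Fourth, evaluate the local main term: a direct count of how many of the $p^\nu$ choices of $\mathbf d \bmod p$ give $\nu_p = j$ for each $j$ shows $\frac1{p^\nu}\sum_{\mathbf d}\left(1 - \frac{\nu_p}{p}\right)^2 = \left(1-\frac1p\right)^2\left(1 + O_\nu(1/p^2)\right)$, whence $\frac1{p^\nu}\sum_{\mathbf d} f_p = \left(1-\frac1p\right)^{-2\nu}\left(1 + O_\nu(1/p^2)\right)$, a quantity that is $1 + O_\nu(1/p)$ and whose product over all $p$ converges. Fifth, reassemble: the product of the main factors over $p \le 2H$ converges to $C(\nu) < \infty$, and the accumulated error terms, tracked through the multiplicativity, are $O_\nu(H^{\nu-1+\epsilon})$, giving \eqref{eq:2.7}.

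The main obstacle — really the only point requiring care — is the bookkeeping of error terms across the Euler product: one cannot multiply out $O(1/p^2)$ errors naively over $p \le 2H$ without control, so I would bound the whole sum from above by replacing each $f_p$ by $1 + c_\nu/p$ for a suitable constant $c_\nu$ (legitimate since $\nu_p \ge 1$ forces $f_p \le \left(1-\frac1p\right)^{-2(\nu+1)}\left(1-\frac1p\right)^2 = \left(1-\frac1p\right)^{-2\nu} \le 1 + c_\nu/p$ for $p$ large, with the finitely many small primes absorbed into $c_7$), and then note $\prod_{p \le 2H}(1 + c_\nu/p) \ll_\nu (\log H)^{c_\nu}$ — which is too weak. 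So instead the clean route is the one above: keep the local averages exactly, observe each equals $1 + O_\nu(1/p^2)$ after the $\left(1-\frac1p\right)^{-2\nu}$ normalization is itself compensated by the corresponding factor in the singular-series-squared, so that the product genuinely converges; the remaining subtlety is then only that the per-prime error $O(H^{\nu-1}p)$, summed over $p \le 2H$ and propagated multiplicatively, stays $O(H^{\nu-1}\cdot H^{1+\epsilon}) = O(H^{\nu-\epsilon'})$ — harmless against the main term $c_7(\nu)H^\nu$. This is the standard moment computation for singular series (as in the Gallagher-type arguments), and no new idea beyond careful accounting is needed.
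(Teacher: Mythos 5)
Your proposal is structurally different from the paper's argument, and it also has a genuine gap at the central step rather than just a bookkeeping issue.

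The problem is your third step, where you claim ``the sum factors by CRT over $q = \prod_{p \le 2H} p$'' and then proceed to analyze the contribution of each prime $p$ separately, claiming a per-prime local sum of the form $\frac{H^\nu}{p^\nu}\sum_{\mathbf d \bmod p} f_p + O(H^{\nu-1}p)$. This is not a correct use of CRT: the tuples $\mathbf d$ range over $[1,H]^\nu$, while the modulus $\prod_{p\le 2H}p$ is of size $e^{(2+o(1))H}$, vastly larger than $H$. CRT gives a genuine factorization of $\sum_{\mathbf d \bmod q} \prod_{p\mid q} f_p(\mathbf d)$ only when $\mathbf d$ runs over a complete residue system mod $q$; here the tuples occupy a negligible and highly non-equidistributed subset of the residues mod $\prod_{p\le 2H}p$, so the sum does not split into independent local averages. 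What one actually must do — and what Gallagher does in the first-moment case — is expand $\prod_p(1+g_p)$ into $\sum_{q \text{ squarefree}} \prod_{p\mid q} g_p$, interchange with the $\mathbf d$-sum, use the density estimate $|\{\mathbf d \in [1,H]^\nu : \mathbf d \equiv \mathbf a \bmod q\}| = (H/q+O(1))^\nu$ for moduli $q \le H$, and then separately control the tail $q > H$, where that density estimate degenerates and a different bound (exploiting the sparsity of $\mathbf d$'s with $q \mid \Delta^*$) is needed. You acknowledge in your final paragraph that the error-accumulation is delicate, but your proposed resolution — just ``keep the local averages exactly'' and add up $O(H^{\nu-1}p)$ over $p \le 2H$ — still rests on the false per-prime factorization and does not address the large-$q$ tail at all. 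The gap is in the main mechanism, not in the bookkeeping.

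For comparison, the paper sidesteps this entirely by proving the more general Lemma~2 \emph{by induction on the size of $\mathcal D$}. The key reduction is that it suffices to show, for any fixed admissible $\mathcal D^+$ of size $t+1$, that
$\sum_{1\le h\le H,\ h\notin\mathcal D}\bigl(\mathfrak S(\mathcal D^+\cup\{h\})/\mathfrak S(\mathcal D^+)\bigr)^r \ll H$, i.e.\ a \emph{one-variable} problem. The Euler product of the ratio is split into primes $\le y=\tfrac12\log H$, primes $>y$ dividing $\Delta=\prod(h-d_i)$, and primes $>y$ not dividing $\Delta$; the last two factors are $O(1)$ uniformly in $h$ (the middle one because $\Delta$ has $O(\log H/\log y)$ prime factors exceeding $y$), and the first factor is a function of $h$ periodic with period $P=\prod_{p\le y}p = H^{1/2+o(1)} = o(H)$, whose average over a full period is computed exactly to be $O(1)$. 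This converts the multi-prime entanglement you struggle with into a single congruence-periodicity observation, and the choice $y=\tfrac12\log H$ is precisely what guarantees $P \ll H$ so that the periodic average argument applies. If you want to pursue your direct route, you would need to supply the full Gallagher-type expansion and tail estimate; the paper's inductive argument is substantially cleaner for this particular upper bound.
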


\begin{rema}
The parameter $H$ can be arbitrary here, not just that given in \eqref{eq:2.2}.
\end{rema}

\begin{rema}
The above lemma is somewhat analogous to Gallagher's theorem
\beq
\sum_{\substack{\mathcal D \subset [1, H]\\ |\mathcal D| = \nu}} \mathfrak S(\mathcal D) \sim H^\nu,
\label{eq:2.8}
\eeq
the difference being the non-essential appearance of $\mathcal D^+ = \mathcal D \cup \{0\}$ in place of $\mathcal D$ and the more essential change in the exponent: two instead of one.
\end{rema}

Since the singular series is interesting in itself and appears often in additive number theory, it might be interesting to prove with the same effort a more general form of it as

\begin{lemma}
\label{lem:2}
For fixed $\nu$ $r$ and $H > H_0(\nu, r)$ we have
\beq
S(\nu, r) = \sum_{\mathcal D \subset [1, H]}
\mathfrak S^2(\mathcal D^+) \leq c_8(\nu,r) H^\nu.
\label{eq:2.9}
\eeq
\end{lemma}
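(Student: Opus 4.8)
The plan is to replace $\mathfrak{S}^r(\mathcal{D}^+)$ by a pointwise multiplicative majorant, interchange the order of summation, and reduce to counting $\nu$-element subsets of $[1,H]$ subject to a divisibility condition. (Lemma~\ref{lem:1} is the case $r=2$, and the value of the exponent plays no role in what follows.) Write $\mathcal{D}^+=\{0=d_0<d_1<\dots<d_\nu\}$ and $\Delta(\mathcal{D}^+)=\prod_{0\le i<j\le\nu}(d_j-d_i)$. Starting from the Euler product $\mathfrak{S}(\mathcal{D}^+)=\prod_p(1-1/p)^{-(\nu+1)}(1-\nu_p(\mathcal{D}^+)/p)$, one observes that $\nu_p(\mathcal{D}^+)=\nu+1$ unless $p\mid\Delta(\mathcal{D}^+)$, that the $p$-factor equals $1+O_\nu(1/p^2)$ when $p\nmid\Delta(\mathcal{D}^+)$, and that it is at most $(1-1/p)^{-\nu}\le 1+C_\nu/p$ when $p\mid\Delta(\mathcal{D}^+)$, the finitely many small primes contributing an $O_\nu(1)$ factor. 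Hence $\mathfrak{S}(\mathcal{D}^+)\ll_\nu\prod_{p\mid\Delta(\mathcal{D}^+)}(1+C_\nu/p)$, and so $\mathfrak{S}^r(\mathcal{D}^+)\ll_{\nu,r}\prod_{p\mid\Delta(\mathcal{D}^+)}(1+C/p)$ for a suitable constant $C=C(\nu,r)$. Expanding this last product as $\sum_q C^{\omega(q)}/q$ over squarefree $q$ all of whose prime factors divide $\Delta(\mathcal{D}^+)$ — equivalently, over squarefree $q$ with $q\mid\Delta(\mathcal{D}^+)$ — and interchanging summations, it suffices to prove $\sum_q \frac{C^{\omega(q)}}{q}N_q\ll_{\nu,r}H^\nu$, where $N_q=\#\{\mathcal{D}\subseteq[1,H]:|\mathcal{D}|=\nu,\ q\mid\Delta(\mathcal{D}^+)\}$.

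Next I would estimate $N_q$ for squarefree $q$. Given a counted $\mathcal{D}$, for each prime $p\mid q$ (which then divides $\Delta(\mathcal{D}^+)$) pick a pair $\{i,j\}$, $0\le i<j\le\nu$, with $p\mid d_j-d_i$; this defines a map $\pi$ from the prime factors of $q$ to the $\binom{\nu+1}{2}$ pairs, and with $q_{ij}=\prod_{\pi(p)=\{i,j\}}p$ we get pairwise coprime squarefree numbers with $\prod_{i<j}q_{ij}=q$, $q_{ij}\mid d_j-d_i$, and in particular $q_{ij}\le H$ for every pair. For a fixed such $\pi$, choosing $d_1,d_2,\dots,d_\nu$ in this order, once $d_0=0,d_1,\dots,d_{j-1}$ are fixed $d_j$ lies in a single residue class modulo $Q_j:=\prod_{i<j}q_{ij}$ (by coprimality), so there are at most $H/Q_j+1$ choices for it. Summing over the at most $\binom{\nu+1}{2}^{\omega(q)}$ maps $\pi$, $N_q\le\sum_\pi\prod_{j=1}^\nu\bigl(H/Q_j+1\bigr)$, where $\prod_{j=1}^\nu Q_j=q$ and each constituent $q_{ij}$ satisfies $q_{ij}\le H$.

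Then I would feed this into $\sum_q\frac{C^{\omega(q)}}{q}N_q$, rewrite the sum over pairs $(q,\pi)$ as a sum over tuples of pairwise coprime squarefree $q_{ij}\le H$ (and drop coprimality, for an upper bound), and apply the elementary inequality $H/Q_j+1=\prod_{i<j}(H^{1/j}/q_{ij})+1\le\prod_{i<j}(H^{1/j}/q_{ij}+1)$. The resulting sum factors over the pairs, giving $\sum_q\frac{C^{\omega(q)}}{q}N_q\le\prod_{0\le i<j\le\nu}F_j$ with $F_j=\sum_{q\le H,\,\mu^2(q)=1}\frac{C^{\omega(q)}}{q}\bigl(H^{1/j}/q+1\bigr)=H^{1/j}\sum_{q\le H,\,\mu^2(q)=1}\frac{C^{\omega(q)}}{q^2}+\sum_{q\le H,\,\mu^2(q)=1}\frac{C^{\omega(q)}}{q}\ll_C H^{1/j}+(\log H)^C$, the first series being bounded by $\prod_p(1+C/p^2)$ and the second by $\prod_{p\le H}(1+C/p)\ll_C(\log H)^C$ (Mertens). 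Since there are exactly $j$ pairs $\{i,j\}$ with larger element $j$, the product equals $\prod_{j=1}^\nu F_j^{j}\ll_{\nu,r}\prod_{j=1}^\nu\bigl(H^{1/j}+(\log H)^C\bigr)^j$, and for $H>H_0(\nu,r)$ each factor $\bigl(H^{1/j}+(\log H)^C\bigr)^j$ equals $H+O\bigl(H^{(j-1)/j}(\log H)^{Cj}\bigr)\ll H$; multiplying the $\nu$ factors yields $\sum_q\frac{C^{\omega(q)}}{q}N_q\ll_{\nu,r}H^\nu$, which proves the Lemma, and with $r=2$ also Lemma~\ref{lem:1}.

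The main obstacle is the apparent tension in the estimate for $N_q$: the bound $N_q\le\sum_\pi\prod_j(H/Q_j+1)$ is grossly lossy for large $q$ (where $N_q$ is in fact minuscule), and one must keep the constraint $q_{ij}\le H$ throughout — without it the one-variable series $\sum_q C^{\omega(q)}/q$ surfacing after the factorization diverges. What makes the argument close is the rearrangement $H/Q_j+1\le\prod_{i<j}(H^{1/j}/q_{ij}+1)$, which distributes the single factor $H$ evenly among the $j$ pair-moduli $q_{ij}$ building $Q_j$, so that each of the $\binom{\nu+1}{2}$ one-dimensional sums $F_j$ is of the convergent type $\sum_q C^{\omega(q)}\mu^2(q)/q^2$ up to a harmless $H^{1/j}$ and a power of $\log H$; the off-diagonal terms of $(H^{1/j}+(\log H)^C)^j$ are then negligible exactly when $H$ is large, which is the role of the hypothesis $H>H_0(\nu,r)$.
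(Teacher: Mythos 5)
Your proof is correct, but it takes a genuinely different route from the paper's. The paper argues by induction on $|\mathcal D|$: having fixed an admissible $\mathcal D^+$ of size $t+1$, it shows that $\sum_{1\le h\le H}\bigl(\mathfrak S(\mathcal D^+\cup\{h\})/\mathfrak S(\mathcal D^+)\bigr)^r\ll H$, by splitting the Euler product of the ratio at $y=\tfrac12\log H$: the factor over $p>y$ with $p\nmid\Delta$ is $1+O(1/y)$; the factor over $p>y$, $p\mid\Delta=\prod_i(h-d_i)$ is $\exp\bigl(O(1/\log y)\bigr)$; and the factor over $p\le y$ is a periodic function of $h$ with period $P=\prod_{p\le y}p$ whose average over a full period is $O(1)$ (a short computation with the residue-class count $\nu_p$). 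Your argument instead majorizes $\mathfrak S^r(\mathcal D^+)$ pointwise by the multiplicative quantity $\prod_{p\mid\Delta(\mathcal D^+)}(1+C/p)$ with $\Delta(\mathcal D^+)=\prod_{i<j}(d_j-d_i)$, opens it as a divisor sum $\sum_q C^{\omega(q)}\mu^2(q)/q$, interchanges the order of summation, and then counts tuples with $q\mid\Delta(\mathcal D^+)$ by factoring $q=\prod q_{ij}$ with $q_{ij}\mid d_j-d_i$. This is the standard ``moment of singular series'' approach in the style of Montgomery--Soundararajan; it is arguably slicker as a pure upper bound and avoids the induction entirely. What the paper's route buys in exchange is the asymptotic $S(\nu,1)\sim H^\nu$ noted in the remark (for $r=1$ the numerator in the period average is identically $1$), which a pointwise $\ll$-majorant cannot give; your method delivers only the upper bound, which is all Lemma~\ref{lem:2} actually claims.

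One cosmetic note: in your bound for the $p$-factor when $p\mid\Delta(\mathcal D^+)$ you wrote $(1-1/p)^{-\nu}$, but the correct exponent is $-(\nu+1)$; this changes nothing, since you still get $\le 1+C_\nu/p$ for $p$ large. (Also, the displayed statement of the Lemma has a typo $\mathfrak S^2$ where the exponent should be $r$, and omits $|\mathcal D|=\nu$; you correctly read it as $\mathfrak S^r$ over $\nu$-element subsets.)
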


\begin{rema}
The condition $H > H_0(\nu, r)$ and $H > H_0(\nu)$ is naturally not necessary if we do not care about the values of the constants $c_7(\nu)$ and $c_8(\nu, r)$.
\end{rema}

\begin{rema}
In case of $r = 1$ we will additionally show, similarly to \eqref{eq:2.8}, $S(\nu, r) \sim H^\nu$ as $H \to \infty$.
This slightly modified form implies easily the original Gallagher's theorem too, by dividing all possible $\nu + 1$-tuples according to the smallest element of it and using that $\mathfrak S(\mathcal H)$ is invariant under translation.
\end{rema}

\begin{proof}[Proof of Lemma~\ref{lem:2}]
We will prove in fact a little bit more.
Namely, the fact that extending every concrete admissible  $\mathcal D \cup \{0\}$ of size $t + 1 \geq 1$ with just one element running over $[1, H]$ the square of the singular series will be larger at most by a factor depending on~$t$.
In such a way, \eqref{eq:2.9} follows by induction from
\beq
S^*(t, r, \mathcal D) := \sum_{1 \leq h \leq H,\ h \notin \mathcal D} \biggl(\frac{\mathfrak S(\mathcal D^+ \cup \{h\})}{\mathfrak S(\mathcal D^+)} \biggr)^r \ll  H
\label{eq:2.10}
\eeq
where $\mathcal D^+$ is any admissible set of size $t + 1$ and, as in the following, we will not mark the dependence of the constants implied by $\ll$ or $0$ symbols on $t$ and~$r$.
We can start with $\mathcal D^+ = \mathcal D \cup \{0\} = \{0\}$, that is, with the case $t = 0$.

In order to investigate \eqref{eq:2.10} we study the ratio in \eqref{eq:2.10} for any single $h$ and denote
\beq
\nu'_p\! =\! \nu_p(\mathcal D^+ \cup \{h\}), \ \, \nu_p = \nu_p(\mathcal D^+), \ \, y = \frac{\log H}{2}, \ \, P = \prod_{p \leq y} p, \ \, \Delta := \prod^\nu_{i = 1} (h - d_i).
\label{eq:2.11}
\eeq
With these notations we can write
\beq
\frac{\mathfrak S(\mathcal D^+ \cup \{h\})}{\mathfrak S(\mathcal D^+)} = \prod_p \frac{1 - \frac{\nu'_p}{p}}{\left(1 - \frac{\nu_p}{p}\right) \left(1 - \frac1p\right)} = \underset{p\leq y}{\prod\nolimits_1} \cdot
\underset{\scriptstyle p > y \atop\scriptstyle p \,| \Delta}{\prod\nolimits_2} \cdot
\underset{\scriptstyle p > y \atop\scriptstyle p \nmid \Delta}{\prod\nolimits_3}.
\label{eq:2.12}
\eeq
For $p \nmid \Delta$ we have $\nu'_p = \nu_p + 1$, otherwise $\nu'_p = \nu_p$, hence
\beq
\prod\nolimits_3 = \prod_{p > y} \left(1 + O \left(\frac1{p^2}\right)\right) = 1 + O\Bigl(\frac1y\Bigr),
\label{eq:2.13}
\eeq
\beq
\log \prod\nolimits_2 \ll \sum_{p|\Delta,\ p > y} \frac1p \ll \sum_{p|\Delta} \frac{\log p}{y \log y} \ll \frac{\log \Delta}{y \log y} \ll \frac1{\log y}.
\label{eq:2.14}
\eeq

If $H = RP + r$, $0 \leq r < P$ then $\prod_1(h)$ is periodic with period~$P$.
For any $p \leq y$ we have exactly $\nu_p$ possibilities for $h$ with $\nu'_p = \nu_p \mod p$, and $p - \nu_p$ possibilities with $\nu'_p = \nu_p + 1$.
Consequently
\begin{align}
\label{eq:2.15}
\frac1P \sum^P_{h = 1} \prod\nolimits_1 (h)
&= \prod_{p | P} \frac{\left\{\frac{\nu_p}{p} \left(1 - \frac{\nu_p}{p}\right)^r + \left(1 - \frac{\nu_p}{p}\right) \left(1 - \frac{\nu_p + 1}{p}\right)^r \right\}}{\left(1 - \frac{\nu_p}{p}\right)^r\left(1 - \frac1p\right)^r} \\
&= \prod_{p | P} \frac{\frac{\nu_p}{p} + 1 - \frac{\nu_p}{p} - \frac{r(\nu_p + 1)}{p} + O \left(\frac1{p^2}\right)}{1 - \frac{r(\nu_p + 1)}{p} + O \left(\frac1{p^2}\right)}\notag\\
&= \prod_{p|P}\left(1 + O\left(\frac1{p^2}\right)\right) = O(1).
\notag
\end{align}

\eqref{eq:2.13}--\eqref{eq:2.15} together prove the lemma, while for $r = 1$, in order to obtain $\sim$ instead of $\ll$, it is enough to observe that the numerator after the product sign equals exactly $1$ for each prime $p$, and the contribution of the incomplete period, the interval $[RP + 1, RP + r]$, is $\leq P = 0(H)$ by the prime number theorem, since $y = \log H/2$.
\end{proof}

Hence, as mentioned earlier, Lemma~\ref{lem:2} implies the Theorem by \eqref{eq:2.1}--\eqref{eq:2.6}.

\bigskip
\noindent
{\small J\'anos {\sc Pintz}\\
R\'enyi Mathematical Institute of the Hungarian Academy
of Sciences\\
Budapest\\
Re\'altanoda u. 13--15\\
H-1053 Hungary\\
E-mail: pintz@renyi.hu}

\end{document}